\newcommand{\Haus}{\dim_{\mathrm{H}}}
\newtheorem*{thm*}{Theorem}
\newtheorem*{conj*}{Conjecture}
\newtheorem*{ques*}{Question}
\newtheorem*{rem*}{Remark}
\newtheorem*{defn*}{Definition}
\newtheorem*{mainques*}{Main questions}
\newtheorem{thm}{Theorem}[section]
\newtheorem{lma}[thm]{Lemma}
\newtheorem{defn}[thm]{Definition}
\newtheorem{prop}[thm]{Proposition}
\newtheorem{rem}[thm]{Remark}
\newtheorem{ques}[thm]{Question}
\def\supp{\mathrm{supp}}
\begin{document}
\title[Continuous Bernoulli convolutions]{Bernoulli convolutions with Garsia parameters in $(1,\sqrt{2}]$ have continuous density functions}

\author{Han Yu}
\address{Han Yu\\
	Department of Pure Mathematics and Mathematical Statistics\\University of Cambridge\\CB3 0WB \\ UK }
\curraddr{}
\email{hy351@cam.ac.uk}
\thanks{}

\subjclass[2020]{ 28A78, 42A85, 37A44}

\keywords{Mahler measure, Bernoulli convolution, Garsia number}

\maketitle

\begin{abstract}
Let $\lambda\in (1,\sqrt{2}]$ be an algebraic integer with Mahler measure $2.$ A classical result of Garsia shows that the Bernoulli convolution $\mu_\lambda$ is absolutely continuous with respect to the Lebesgue measure with a density function in $L^\infty$. In this paper, we show that the density function is continuous.
\end{abstract}

\maketitle
\allowdisplaybreaks

\section{Introduction}
Let $\lambda\in (1,2].$ Consider the random sum
\[
S_\lambda=\sum_{i\geq 0} \pm \lambda^{-i},
\]
with each $\pm$ being chosen independently with equal probability, i.e. $(1/2,1/2).$ Let $\mu_\lambda$ be the distribution of $S_\lambda.$ It is the Bernoulli convolution with parameter $\lambda.$ Notice that when $\lambda=2,$ $\mu_\lambda$ is extremely simple. It is simply the Lebesgue measure on $[-2,2]$ which is normalized to be a probability measure. In general, it is not an easy task to understand $\mu_\lambda.$ The study of $\mu_\lambda$ has a long and rich history. It is a subject that mixes algebraic number theory, probability theory, harmonic analysis, geometric measure theory and additive combinatorics. 

The ultimate problem in this area is to completely understand $\mu_\lambda$ for every $\lambda\in (1,2].$ A general outline can be viewed as follows. Of course, many more questions can be asked.
\begin{itemize}
    \item For each $\lambda\in (1,2],$ determine whether or not $\mu_\lambda$ is absolutely continuous with respect to the Lebesgue measure.
    
    \item If $\mu_\lambda$ is not absolutely continuous with respect to the Lebesgue measure, then determine its Hausdorff dimension. 
    
    \item If $\mu_\lambda$ is absolutely continuous, then determine its density function $f_\lambda.$ Clearly, $f_\lambda$ is $L^1.$ 
    
    \item Determine whether or not $f_\lambda\in L^p,p>1.$
    
    \item Determine whether or not $f_\lambda\in L^\infty.$
    
    \item Determine whether or not $f_\lambda$ is continuous (and differentiable of any order). 
    
    \item If $f_\lambda$ is continuous, determine the value of $f_\lambda(x)$ for each $x\in\mathbb{R}.$
\end{itemize}

Up to now, we have the following answers to the above questions.
\subsection*{Hausdorff dimension} (See Section \ref{sec: Haus})
\begin{itemize}
    \item \cite{H14}, \cite{BV}, \cite{BV2}, \cite{VarjuDimension}: If $\Haus \mu_\lambda<1$ then $\lambda$ is a root of infinitely many polynomials with coefficients $\pm 1, 0.$

\item \cite{AFKP}: There is an effective algorithm to approximate $\Haus \mu_\lambda$ for all algebraic $\lambda.$\footnote{In fact, there is an effective algorithm to compute the Lyapunov exponent $h_\lambda$ of $\mu_\lambda.$ It is known that $\Haus \mu_\lambda=\min\{1,h_\lambda/\log \lambda\}.$ There is only one issue: if $h_\lambda=\log \lambda$ then the algorithm cannot be used to confirm this. In this paper, $\log$ is the logarithm with base $2$.} 
\end{itemize}

\subsection*{Continuity}
Let $\lambda$ be an algebraic number over $\mathbb{Q}.$ The Mahler measure of $\lambda$ is 
\[
M_\lambda=|a|\prod_{i=1}^n \max\{|\lambda_i|,1\},
\]
where $a$ is the leading coefficient of the minimal polynomial of $\lambda$ and $\lambda_1,\dots,\lambda_n$ are all the conjugates of $\lambda.$
\begin{itemize}
    \item \cite{Garsia}: If $\lambda$ is a Garsia number, i.e., an algebraic integer in $(1,2]$ with $M_\lambda=2$, then $\mu_\lambda$ is in $L^\infty.$ See Section \ref{Garsia Pre}.
    
    \item \cite{VarjuContinuity}: If $\lambda$ is an algebraic number in $(1,2]$ with 
	\[
	\lambda>1-c\min\{\log M_\lambda, (\log M_\lambda)^{-1-\epsilon}\},
	\]
	where $\epsilon, c$ are effectively computable constants, then $\mu_\lambda$ is absolutely continuous with a density function in $L(\log L).$
	
    \item \cite{K20}: Let $\lambda$ be an algebraic number in $(1,2]$ with
	\[
	(\log M_\lambda-\log 2)(\log M_\lambda)^2<\frac{1}{27}(\log M_\lambda-\log\lambda^{-1})^3\lambda^4.
	\]
	Suppose that $\lambda$ is not a root of polynomials with coefficients in $\pm 1,0.$ Then $\mu_\lambda$ is absolutely continuous.
\end{itemize}

\subsection*{Generic results}
\cite{E}, \cite{K71}, \cite{K85}, \cite{Solomyak}, \cite{PS}, \cite{ShD}, \cite{Sh}: There is a set $E\subset (1,2]$ with zero Hausdorff dimension such that for each $p\in (1,\infty),$ for each $\lambda\in (1,2]\setminus E,$ $\mu_\lambda$ is in $L^p.$ Moreover, if $\lambda\in (1,\sqrt{2}]\setminus E,$ then $\mu_\lambda$ has a continuous density function.
\\

It is relatively less known when $\mu_\lambda$ is absolutely continuous with respect to the Lebesgue measure with a continuous density function. In this situation, we say that $\mu_\lambda$ is continuous.  By using the fact that convolutions of $L^2$ functions are continuous, it is possible to find such examples of $\mu_\lambda$. See Section \ref{sec: pre example}. Our main result is as follows.
\begin{thm}
Let $\lambda\in (1,\sqrt{2}]$ be a Garsia number. Then $\mu_\lambda$ is continuous. In fact, for some $\beta>0,$ $\mu_\lambda$ is $\beta$-H\"{o}lder continuous.
\end{thm}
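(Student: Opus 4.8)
The plan is to exploit the restriction $\lambda\le\sqrt2$ through the elementary identity that expresses $\mu_\lambda$ as a convolution built from the Bernoulli convolution of the \emph{squared} parameter. Splitting the defining series $S_\lambda=\sum_{i\ge0}\epsilon_i\lambda^{-i}$ into even- and odd-indexed terms gives
\[
S_\lambda \;\stackrel{d}{=}\; \sum_{j\ge0}\epsilon_{2j}(\lambda^2)^{-j}\;+\;\lambda^{-1}\sum_{j\ge0}\epsilon_{2j+1}(\lambda^2)^{-j},
\]
a sum of two independent copies of $S_{\lambda^2}$, the second rescaled by $\lambda^{-1}$. Writing $\rho=\lambda^2\in(1,2]$ and letting $\nu$ be the law of $\lambda^{-1}S_\rho$, this reads $\mu_\lambda=\mu_\rho*\nu$, so on the Fourier side $\hat\mu_\lambda(\xi)=\hat\mu_\rho(\xi)\,\hat\mu_\rho(\lambda^{-1}\xi)$. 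Since $\nu$ is merely a dilate of $\mu_\rho$, it lies in the same smoothness class. Thus, \emph{if} one can show $\mu_\rho\in H^{s}$ (fractional Sobolev) for some $s>0$, then by Cauchy--Schwarz
\[
\int_\RR (1+|\xi|)^{s}\,|\hat\mu_\lambda(\xi)|\,d\xi
\le \Big(\int (1+|\xi|)^{2s}|\hat\mu_\rho(\xi)|^2 d\xi\Big)^{1/2}\Big(\int|\hat\mu_\rho(\lambda^{-1}\xi)|^2 d\xi\Big)^{1/2}<\infty,
\]
so $\hat\mu_\lambda$ is integrable against the weight $(1+|\xi|)^{s}$ and $\mu_\lambda$ therefore has a $\beta$-H\"older density with $\beta=\min\{s,1\}$. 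The whole problem is reduced to a quantitative Sobolev regularity statement for $\mu_\rho$.

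The threshold $\sqrt2$ is exactly right already at the level of the expected Fourier decay. Heuristically $|\hat\mu_\rho(\xi)|^2=\prod_{j\ge0}\cos^2(2\pi\rho^{-j}\xi)$ behaves, for $|\xi|\approx\rho^M$, like $2^{-M}\approx|\xi|^{-\log 2/\log\rho}$; the exponent $\alpha=\log 2/\log\rho$ exceeds $1$ precisely when $\rho<2$, i.e.\ when $\lambda<\sqrt2$, and this is exactly the integrability needed for $\mu_\rho\in L^2$, with a margin $\alpha-1>0$ giving $\mu_\rho\in H^s$ for every $s<(\alpha-1)/2$. The endpoint $\lambda=\sqrt2$ is trivial, since then $\rho=2$ and $\mu_\rho$ is normalised Lebesgue measure on an interval, hence bounded. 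Note that one cannot simply quote Garsia's theorem for $\mu_\rho$: the Mahler measure of $\rho=\lambda^2$ is $4$, so $\rho$ is not a Garsia number and $\mu_\rho$ need not be bounded.

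The technical heart, and the step I expect to be the main obstacle, is to turn this heuristic decay into a rigorous energy bound. Equivalently, I must prove a \emph{super-linear} small-ball estimate $\PP\big(|S_\rho-S_\rho'|\le r\big)\le C\,r^{1+\delta}$ for independent copies $S_\rho,S_\rho'$ and some $\delta>0$, since by the standard energy identity $\int\!\!\int|x-y|^{-1-\delta}\,d\mu_\rho\,d\mu_\rho\approx\int|\hat\mu_\rho(\xi)|^2|\xi|^{\delta}\,d\xi$ this is equivalent to $\mu_\rho\in H^{\delta/2}$. Here the Garsia property of $\lambda$ enters decisively: the difference $S_\rho-S_\rho'=\sum_j\eta_j\rho^{-j}$ with $\eta_j\in\{-2,0,2\}$ is a $\lambda$-series supported on even indices, so the Garsia separation estimate of Section~\ref{Garsia Pre}---which for an algebraic integer of Mahler measure $2$ bounds from below the nonzero values of $\sum_{i<N}c_i\lambda^{-i}$ with bounded integer coefficients---rules out the accidental near-vanishings and resonances that, for Pisot parameters, destroy Fourier decay.

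The delicate point is that the \emph{crude} use of this separation only reproduces the linear bound $\PP(\,\cdot\le r)\ll r$, and for $\mu_\rho$ not even that, because the tail scale $\rho^{-M}$ is far coarser than the $4^{-M}$ separation scale of the truncated point set. The super-linear gain must instead come from a second-moment (large-deviation) count, showing that the truncated products $\prod_{j<M}\cos^2(2\pi\rho^{-j}\xi)$ are large only on a set of $\xi$ of proportion $\ll(\rho/2)^{M}$; controlling how often the orbit $(\rho^{-j}\xi \bmod 1)_j$ clusters near $0$, using the Garsia separation, is the crux. Once this averaged smallness is in hand, $\mu_\rho\in H^s$ follows, and the remaining assembly---the even/odd splitting and the Cauchy--Schwarz bound above---is soft.
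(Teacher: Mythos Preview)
Your reduction via the even/odd splitting $\mu_\lambda=\mu_\rho*T_{\lambda^{-1}}\mu_\rho$ with $\rho=\lambda^2$ is correct and is exactly the $k=2$ instance of the decomposition the paper uses. The Cauchy--Schwarz step is also fine. The gap is the step you yourself flag as ``the main obstacle'': establishing $\mu_\rho\in H^s$, or even $\mu_\rho\in L^2$. You do not prove this, and the sketched second-moment/large-deviation argument based on Garsia separation is not carried out. It is genuinely unclear that such an argument can be made to work from Garsia separation alone: $\rho=\lambda^2$ has Mahler measure $4$, so it is not Garsia, and as you note the crude separation count does not even recover the linear small-ball bound for $\mu_\rho$. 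Proving $\mu_\rho\in L^2$ directly would in effect be proving absolute continuity of $\mu_\rho$ with an $L^2$ density for a non-Garsia algebraic parameter --- this is not known to follow from separation arguments without further input.

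The paper sidesteps exactly this obstacle, and this is the main idea you are missing. It never shows $\mu_{\lambda^k}\in L^2$. Instead it combines two black boxes. First, Shmerkin's regularity theorem gives that suitable convolution sub-products $\nu_J$ (built from $(k-1)/2$ of the $k$ pieces, with $k$ chosen large so that their similarity dimension exceeds $1$) are $1_-$-regular; on the Fourier side this only yields $\int|\xi|^{-\epsilon}|\hat\nu_J(\xi)|^2\,d\xi<\infty$ for every $\epsilon>0$, strictly weaker than $\hat\nu_J\in L^2$. Second, the Dai--Feng--Wang power Fourier decay $|\hat\mu_\lambda(\xi)|\ll|\xi|^{-\sigma}$, together with the product formula $|\hat\mu_\lambda(\xi)|=\prod_{j=0}^{k-1}|\hat\mu_{\lambda^k}(\lambda^j\xi)|$, forces a ``virtual decay'': for each $\xi$, at least one factor is $\ll|\xi|^{-\sigma/k}$. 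Dropping that factor and applying Cauchy--Schwarz to the remaining $k-1$ (split into two halves $J,J'$), the extra $|\xi|^{-\sigma/k}$ exactly compensates the $|\xi|^{\epsilon}$ deficit from Shmerkin's bound, making $\hat\mu_\lambda\in L^1$ and yielding H\"older continuity. In your $k=2$ scheme there is no such slack: after one Cauchy--Schwarz you need the full $L^2$ bound on each factor, which is the very thing that is unavailable. Either import the Dai--Feng--Wang decay and Shmerkin's theorem (and pass to large $k$), or supply a genuine proof of $\mu_{\lambda^2}\in L^2$; the current proposal does neither.
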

There is no lack of such algebraic numbers. See \cite{Hare}. A particular example is the root of $x^5-2x^4+2x^3-2x^2+2x-2=0$ in $(1,\sqrt{2}].$ The key point is that it is not clear whether or not $\mu_\lambda$ is a convolution of $L^2$ functions.

The proof of the above theorem is simple, although the building blocks are rather heavy. It utilises a result of  \cite[Theorem 1.6]{DFW} on the power Fourier decay and a result of \cite[Theorem 6.2]{Sh} on the regularity of self-similar measures with the exponential separation condition. (This is a refinement of an earlier work of  \cite{H14}.) In fact, it is possible to show the following slightly more general result. In what follows, for functions $f,g:\mathbb{R}\to\mathbb{R}$, $f\ll g$ means that for a constant $C>0,$ $|f(x)|\leq C|g(x)|$ holds for all $x\in\mathbb{R}$ with $|x|>C.$
\begin{thm}\label{thm: main}
Let $\lambda\in (1,\sqrt{2}]$ be an algebraic number such that 
\[
|\hat{\mu}_\lambda(\xi)|\ll |\xi|^{-\sigma}
\]
for some $\sigma>0$ as $\xi\to\infty.$ If $\lambda$ is not a root of polynomials with coefficients $\pm 1,0,$ then $\mu_\lambda$ is $\beta$-H\"{o}lder continuous for some $\beta>0.$
\end{thm}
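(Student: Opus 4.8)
The plan is to reduce Hölder continuity of the density to a fractional Sobolev bound. Since $\mu_\lambda$ is compactly supported, it suffices to prove $\mu_\lambda \in H^s(\mathbb{R})$ for some $s > 1/2$: the one-dimensional embedding $H^s \hookrightarrow C^{0,s-1/2}$ then yields a $\beta$-Hölder density with $\beta = s - 1/2 > 0$. Writing the Sobolev norm dyadically, this amounts to a power-saving bound on the $L^2$ mass of $\hat\mu_\lambda$ over dyadic annuli, namely
\[
I(R) := \int_{R \le |\xi| \le 2R} |\hat\mu_\lambda(\xi)|^2 \, d\xi \ll R^{-1-\delta}
\]
for some $\delta>0$; indeed $\sum_k 2^{2sk} I(2^k)$ is comparable to $\|\mu_\lambda\|_{H^s}^2$, and a bound $I(2^k)\ll 2^{-(1+\delta)k}$ permits any $s<\tfrac12+\tfrac{\delta}{2}$. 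The point is that the pointwise hypothesis $|\hat\mu_\lambda(\xi)| \ll |\xi|^{-\sigma}$ only delivers the weak bound $I(R)\ll R^{1-2\sigma}$, which crosses $-1$ solely when $\sigma>1$; since $\sigma$ may be small, the decay has to be amplified through the self-similar structure.

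First I would record that $\mu_\lambda$ is the self-similar measure of the IFS $\{x\mapsto \lambda^{-1}x\pm 1\}$ with equal weights, so that for every $n$ one has the factorisation $\hat\mu_\lambda(\xi)=\hat\nu_n(\xi)\,\hat\mu_\lambda(\lambda^{-n}\xi)$, where $\nu_n$ is the law of $\sum_{j=0}^{n-1}\pm\lambda^{-j}$ and $\hat\nu_n(\xi)=\prod_{j=0}^{n-1}\cos(2\pi\lambda^{-j}\xi)$. For algebraic $\lambda$, the hypothesis that $\lambda$ is not a root of a polynomial with coefficients in $\{-1,0,1\}$ is exactly the exponential separation condition: distinct generation-$n$ cylinders are $\gg\rho^n$-separated, because the nonzero values of such polynomials at $\lambda$ are bounded below geometrically by a height/Mahler-measure estimate. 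This is the input that lets me invoke \cite[Theorem 6.2]{Sh}: under exponential separation the level-$n$ measures carry full entropy $\log 2$ and $\dim\mu_\lambda=\min\{1,\log 2/\log\lambda\}=1$ in the range $\lambda\le\sqrt2$, together with the quantitative regularity of the finite-level convolutions $\nu_n$ (control on their $\ell^2$ masses) that makes them behave non-degenerately at their natural scale.

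The engine of the proof is a renormalisation of $I(R)$ across scales. Changing variables $\eta=\lambda^{-n}\xi$ in the factorisation expresses $I(R)$ as a scale-$\lambda^{-n}R$ integral of $|\hat\mu_\lambda(\eta)|^2$ weighted by the high-frequency product $\prod_{k=1}^{n}\cos^2(2\pi\lambda^{k}\eta)$. When this product \emph{decorrelates} from $|\hat\mu_\lambda|^2$ it averages to $2^{-n}$, and after accounting for the Jacobian $\lambda^{n}$ each scale contributes a factor $\lambda/2<1$. Compounding this over the $\sim\log_\lambda R$ scales down to bounded frequency would give
\[
I(R)\ll \big(\lambda/2\big)^{\log_\lambda R}\ll R^{\,1-1/\log\lambda},
\]
and here the restriction $\lambda\le\sqrt2$, i.e. $\log\lambda\le 1/2$ (base $2$), is precisely what forces the accumulated exponent $1-1/\log\lambda$ strictly below $-1$; for $\lambda>\sqrt2$ the entropy gain $\log 2$ per step cannot outrun the scale contraction and one would only recover $L^2$-type regularity.

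The hard part, and where the power-decay assumption \cite[Theorem 1.6]{DFW} becomes essential, is justifying the decorrelation uniformly across all scales. Expanding $\prod_{k=1}^n\cos^2(2\pi\lambda^k\eta)$ produces, beyond the main term $2^{-n}$, some $2^n-1$ oscillatory integrals of $|\hat\mu_\lambda|^2$ against frequencies $\sum_{k}\pm\lambda^k$, and these error terms are exactly the arithmetic resonances that can destroy the averaging heuristic; the legitimacy of the decorrelation at intermediate scales, where the finite-level separation $\rho^n$ is insufficient, is what the decay $|\hat\mu_\lambda(\xi)|\ll|\xi|^{-\sigma}$ controls (via bounds such as $|\hat\mu_\lambda(\lambda^{-n}\xi)|^2\ll(\lambda^{-n}R)^{-2\sigma}$ on the scaled factor). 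The main technical task is therefore to combine this power decay with the regularity of $\nu_n$ from \cite[Theorem 6.2]{Sh} to bound every error term, choosing the splitting level $n=n(R)$ optimally against the available separation rate so that the per-scale factor $\lambda/2<1$ genuinely compounds and the effective dyadic exponent lands strictly below $-1$ uniformly in $R$. Once $I(R)\ll R^{-1-\delta}$ is established, $\mu_\lambda\in H^s$ for some $s>1/2$, and the Sobolev embedding delivers the $\beta$-Hölder density.
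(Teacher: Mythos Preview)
Your outline has a real gap at the decorrelation step, and the mechanism you propose does not close it. After expanding $\prod_{k=1}^n\cos^2(2\pi\lambda^k\eta)=2^{-n}\sum_{S\subset\{1,\dots,n\}}\prod_{k\in S}\cos(4\pi\lambda^k\eta)$, each of the $2^n-1$ error terms is an integral of $|\hat\mu_\lambda(\eta)|^2$ against an oscillatory factor over $|\eta|\sim\lambda^{-n}R$. You assert that the pointwise bound $|\hat\mu_\lambda(\lambda^{-n}\xi)|^2\ll(\lambda^{-n}R)^{-2\sigma}$ controls these, but that bound applies equally to the main term and gives no reason the error terms are \emph{smaller}; to beat the factor $2^n$ you need genuine cancellation, i.e.\ decay of what is essentially the Fourier transform of $|\hat\mu_\lambda|^2\cdot\mathbf{1}_{[\lambda^{-n}R,\,2\lambda^{-n}R]}$ at the frequencies $\sum_{k\in S}\pm2\lambda^k$, and nothing in your toolkit supplies that. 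Your appeal to \cite[Theorem~6.2]{Sh} for ``regularity of the finite-level convolutions $\nu_n$'' is also misdirected: that theorem gives Frostman regularity of self-similar measures, not $\ell^2$ control of the discrete level-$n$ atomic measures, and Frostman regularity of $\mu_\lambda$ itself (which you already have) does not produce oscillatory cancellation. As written, the ``main technical task'' is the entire proof, and you have not identified an actual tool that performs it.

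The paper's route is quite different and avoids decorrelation altogether. Fix a large odd $k$ and factor $\hat\mu_\lambda(\xi)=\prod_{j=0}^{k-1}\hat\mu_{\lambda^k}(\lambda^j\xi)$. By pigeonhole at least one factor obeys $|\hat\mu_{\lambda^k}(\lambda^{j_0}\xi)|\ll|\xi|^{-\sigma/k}$ (what the paper calls \emph{virtual decay}). The product of the remaining $k-1$ factors is $\hat\nu_{J}\hat\nu_{J'}$ for a balanced split $J\sqcup J'=\{0,\dots,k-1\}\setminus\{j_0\}$; each $\nu_J$ is itself a homogeneous self-similar measure with $2^{(k-1)/2}$ branches and contraction $\lambda^{-k}$, hence similarity dimension $\tfrac{k-1}{2k}\cdot\tfrac{\log 2}{\log\lambda}$, which exceeds $1$ for large $k$ precisely when $\lambda<\sqrt2$. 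Now \cite[Theorem~6.2]{Sh} is applied to these \emph{subsystem} measures to get $1_-$-regularity, hence $\int_{|\xi|\sim 2^m}|\hat\nu_J|^2\ll 2^{\epsilon m}$ for every $\epsilon>0$; Cauchy--Schwarz on dyadic annuli then gives $\int|\xi|^{-\sigma/k}|\hat\nu_J||\hat\nu_{J'}|\,d\xi<\infty$. Summing over the finitely many choices of the dropped index yields $\hat\mu_\lambda\in L^1$, and H\"older continuity follows by the standard Littlewood--Paley argument. The key idea you are missing is that Shmerkin's regularity should be invoked not for $\mu_\lambda$ or for the level-$n$ atoms, but for auxiliary self-similar measures built from roughly half of the $k$ convolution factors, whose similarity dimension has been pushed above $1$; this converts regularity directly into dyadic $L^2$ bounds that combine via Cauchy--Schwarz, with no oscillatory error terms to control.
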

It is likely to be very difficult to test the power Fourier decay property for any specific given $\lambda$. Up to now, the only known examples are Garsia numbers. On the other hand, the power Fourier decay property holds 'generically'. The set of $\lambda$ such that $\mu_\lambda$ does not have power Fourier decay is extremely thin. It has Hausdorff dimension zero (due to Erd\H{o}s \cite{E} and Kahane \cite{K71}. See also \cite{ShD}).

We end the introduction with a small remark about the method. 
\begin{rem}
If we assume that $\mu_{\lambda^k},k\geq 1$ all have power Fourier decay, then Theorem \ref{thm: main} would be a direct consequence of \cite[Lemma 2.1]{ShD}. For Garsia parameters, we strongly believe that this should be true. However, we only know that $\mu_\lambda$ has power Fourier decay. This difficulty can be bypassed by using a weaker power decay property. See (\ref{virtual decay}) in the proof of Theorem \ref{thm: main}.
\end{rem}

\section{Preliminaries}
\subsection{Garsia number}\label{Garsia Pre}
\begin{defn}
 A Garsia number is an algebraic integer in $(1,2]$ with Mahler measure $M_\lambda=2$. Equivalently, it is an algebraic integer in $(1,2]$ with all of its conjugates lying strictly outside of the unit circle.
\end{defn}
The equivalence in the definition is due to Garsia. See \cite[Lemma 1.7]{Garsia}. Moreover, if $\lambda$ is Garsia, then its minimal polynomial $P_\lambda$ has a constant term $\pm 2.$ 
 
\subsection{Self-similar sets/measures on $\mathbb{R}$ and separation conditions}
Let $K\geq 2$ be an integer. Let $r_1,\dots,r_K\in (0,1)$ and $a_1,\dots,a_K\in\mathbb{R}.$ Consider the set of affine maps
\[
\mathcal{F}=\{x\in\mathbb{R}\to r_ix+a_i\}_{1\leq i\leq K}.
\]
By a result in \cite{H81}, there is a uniquely determined compact subset $F\subset\mathbb{R}$ with the property that
\[
F=\bigcup_{i=1}^K f_i(F).
\]
This is the self-similar set associated with the self-similar system $\mathcal{F}.$ Let $p_1,\dots,p_K$ be positive numbers with $p_1+\dots+p_K=1.$ Then again by a result in \cite{H81}, there is a uniquely determined Borel probability measure $\mu$ supported on $F$ with the property that
\[
\mu=\sum_{i=1}^K p_i (f_i\mu).
\]
This is the self-similar measure assiociated with the self-similar system $\mathcal{F}$ and probability weights $p_1,\dots,p_K.$

If $r_1=\dots=r_K$, we say that the $\mathcal{F}, F, \mu$ are homogeneous. If $K=2,$ $r_1=r_2=\lambda^{-1}\in (0,1),a_1=1,a_2=-1, p_1=p_2=1/2,$ then we denote $\mu_\lambda$ to be the associated self-similar measure. This is called to be the Bernoulli convolution with parameter $\lambda.$ This is equivalent to the random sum definition at the beginning of this paper.

We say that $\mathcal{F}'$ is a subsystem of $\mathcal{F}$ if for some $n\geq 1,$
\[
\mathcal{F}'\subset\{f_{\mathbf{i}}\}_{\mathbf{i}\in\{1,\dots,K\}^n},
\]
where $f_{\mathbf{i}}$ is the iterated function
\[
x\in\mathbb{R}\to f_{i_n}\circ f_{i_{n-1}}\circ\dots\circ f_{i_1}(x).
\]

We now introduce the exponential separation condition. Let $n\geq 1$ be an integer. Consider the set of points
\[
F_n=\{f_{\mathbf{i}}(0)\}_{i\in \{1,\dots,K\}^n}.
\]
For $\mathbf{i},\mathbf{j}\in\{1,\dots,K\}^n,$ define the distance
\[
d_n(i,j)=|f_{\mathbf{i}}(0)-f_{\mathbf{j}}(0)|
\]
if $r_{i_1}\dots r_{i_n}=r_{j_1}\dots r_{j_n}$ and $d_n(i,j)=\infty$ otherwise. The $n$-th level gap is defined to be
\[
\Delta_n=\min\{d_n(\mathbf{i},\mathbf{j}),\mathbf{i}\neq \mathbf{j}, \mathbf{i},\mathbf{j}\in\{1,\dots,K\}^n\}.
\]
We say that $\mathcal{F}, F, \mu$ has exact overlaps if $\Delta_n=0$ for infinitely many $n.$ Assume that there are no exact overlaps. We say that $\mathcal{F}, F, \mu$ has the exponential separation condition if
\[
\liminf_{n\to\infty}|\log \Delta_n|/n<\infty.
\]
Suppose that all the parameters in $\mathcal{F}$ are algebraic. If moreover $\mathcal{F}$ does not have exact overlaps, then it has the exponential separation condition. See \cite[Section 5.5]{H14}.

\subsection{Hausdorff dimension of measures}\label{sec: Haus}
Let $\mu$ be a Borel probability measure on $\mathbb{R}.$ The lower/upper Hausdorff dimensions of $\mu$ are defined to be
\[
\underline{\Haus}\mu=\inf\{\Haus B: B \text{ is a Borel subset of }\mathbb{R}, \mu(B)>0\}
\]
\[
\overline{\Haus}\mu=\inf\{\Haus B: B \text{ is a Borel subset of }\mathbb{R}, \mu(B)=1\},
\]
where $\Haus B$ is the Hausdorff dimension of $B$. See \cite[Page 14]{Ma2}. It can be checked (e.g. \cite[Theorem 2.3]{Heu}) that
\[
\underline{\Haus}\mu=\mathrm{essinf}_{x\sim \mu}\liminf_{\delta\to 0}\frac{\log \mu(B_\delta(x))}{\log \delta}
\]
\[
\overline{\Haus}\mu=\mathrm{esssup}_{x\sim \mu}\liminf_{\delta\to 0}\frac{\log \mu(B_\delta(x))}{\log \delta}.
\]
Here $B_\delta(x)$ is the metric ball of radius $\delta$ centred at $x.$ For Bernoulli convolutions and more generally, self-similar measures, the Hausdorff dimension is always well defined (this property is also known as the exact dimensionality)
\[
\Haus \mu=\underline{\Haus}\mu=\overline{\Haus}\mu=\lim_{\delta\to 0}\frac{\log \mu(B_\delta(x))}{\log \delta}
\]
for $\mu.a.e$ $x$. See \cite{Feng}. 
\subsection{Fourier transform and power Fourier decay}
Let $\mu$ be a Borel probability measure on $\mathbb{R}$. The Fourier transform of $\mu$ is defined as follows, 
\[
\hat{\mu}(\xi)=\int_{\mathbb{R}} e^{-2\pi i x\xi}d\mu(x).
\]
The following result is proved in \cite[Theorem 1.6]{DFW}.
\begin{thm}[Dai, Feng and Wang]
Let $\lambda$ be a Garsia number. Then the Bernoulli convolution $\mu_\lambda$ has power Fourier decay, i.e.
\[
|\hat{\mu}_\lambda(\xi)|\ll |\xi|^{-\sigma}
\]
for some $\sigma>0.$
\end{thm}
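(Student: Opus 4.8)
The plan is to begin from the self-similarity of $\mu_\lambda$, which gives the functional equation $\hat{\mu}_\lambda(\xi)=\cos(2\pi\xi)\,\hat{\mu}_\lambda(\xi/\lambda)$ and hence the Riesz product
\[
\hat{\mu}_\lambda(\xi)=\prod_{n=0}^\infty\cos(2\pi\lambda^{-n}\xi).
\]
Since every factor has modulus at most $1$, for any set $G$ of scales one may discard the remaining factors and estimate $|\hat{\mu}_\lambda(\xi)|\le\prod_{n\in G}|\cos(2\pi\lambda^{-n}\xi)|$. Call a scale $n$ \emph{good} if $\mathrm{dist}(\lambda^{-n}\xi,\tfrac12\ZZ)\ge\epsilon$; for such $n$ one has $|\cos(2\pi\lambda^{-n}\xi)|\le\cos(2\pi\epsilon)=:1-c<1$. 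Writing $N=\lfloor\log_\lambda|\xi|\rfloor$, the scales $0\le n\le N$ are exactly those for which $\lambda^{-n}\xi$ ranges between $|\xi|$ and $1$, i.e. the genuinely oscillating factors. Thus the theorem reduces to the uniform statement: there are constants $\epsilon,\rho>0$, depending only on $\lambda$, such that for every large $|\xi|$ at least $\rho N$ of the scales $n\in\{0,\dots,N\}$ are good. Indeed this yields $|\hat{\mu}_\lambda(\xi)|\le(1-c)^{\rho N}\le|\xi|^{-\sigma}$ with $\sigma=\rho\log_\lambda\frac1{1-c}>0$.

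Equivalently one must bound the number of \emph{bad} scales, where $\lambda^{-n}\xi$ is $\epsilon$-close to $\tfrac12\ZZ$, by $(1-\rho)N$; the danger is a long run of consecutive bad scales. I would translate such a run into arithmetic. If $n$ and $n+1$ are both bad, writing $2\lambda^{-n}\xi=a_n+\theta_n$ with $a_n\in\ZZ$ and $|\theta_n|\le2\epsilon$, the relation $\lambda^{-(n+1)}\xi=\lambda^{-1}\lambda^{-n}\xi$ forces $a_{n}=\lambda\,a_{n+1}+(\lambda\theta_{n+1}-\theta_n)$, so that $a_n\approx\lambda a_{n+1}$ up to an error $O(\epsilon)$. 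Hence a bad run of length $m$ produces a chain of integers $a_0,\dots,a_m$ with $|a_{j}-\lambda a_{j+1}|\le C\epsilon$ for all $j$, i.e. an integer sequence shadowing a geometric progression of ratio $\lambda$. The task becomes to show that for a Garsia $\lambda$ such chains have length bounded by a constant $L=L(\lambda,\epsilon)$, uniformly in the starting integer; this immediately gives the density bound with $\rho=1/(L+1)$.

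Here the Garsia hypothesis is decisive, and this step I expect to be the main obstacle. The condition that every conjugate of $\lambda$ lies strictly \emph{outside} the unit circle is the exact opposite of the Pisot condition, under which $\mathrm{dist}(\lambda^k,\ZZ)\to0$ produces arbitrarily long shadowing chains and, by Erd\H{o}s, destroys Fourier decay entirely. Equivalently, the companion matrix of $P_\lambda$ is an integer matrix all of whose eigenvalues have modulus $>1$ and whose determinant is $\pm2$ (the constant term of $P_\lambda$); it therefore induces an expanding, degree-$2$ endomorphism of $\RR^d/\ZZ^d$ with $d=\deg\lambda$, matching the two branches of the convolution. One would like to conclude directly that no long chain $a_j\approx\lambda a_{j+1}$ can exist, since shadowing a ratio-$\lambda$ progression by integers is an anti-Pisot phenomenon. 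The subtlety is that a single relation $a_j-\lambda a_{j+1}=\delta_j\in\ZZ[\lambda]$, though forcing $\delta_j$ small at the real place, carries no contradiction by itself: at a conjugate place $|a_j-\lambda_i a_{j+1}|\approx|\lambda-\lambda_i|\,|a_{j+1}|$ is automatically large once $a_{j+1}$ is, so the integrality $|N(\delta_j)|\ge1$ is satisfied for free. The obstruction is genuinely a joint, multi-scale one, and I would control it by the Erd\H{o}s--Kahane symbolic counting scheme (\cite{E},\cite{K71}): along a bad run the admissible $a_{n+1}$ are, given $a_n$, confined to $O(1)$ values except at a controlled number of branching steps, while the anti-Pisot spectrum of the companion matrix forces the errors $\theta_n$ to expand under iteration, so that a run cannot be sustained past length $L(\lambda,\epsilon)$. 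The exponential-separation/no-exact-overlaps property of algebraic parameters (\cite{H14}) supplies the needed nondegeneracy.

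Assembling the pieces produces at least $\rho N$ good scales and hence the asserted bound $|\hat{\mu}_\lambda(\xi)|\ll|\xi|^{-\sigma}$ for an explicit $\sigma>0$. I note that the same conclusion should also follow from a more structural route: realising $\mu_\lambda$ through the expanding toral endomorphism above and invoking equidistribution/mixing of such maps — equivalently a spectral gap for the associated transfer operator — to manufacture the good scales directly. This viewpoint makes transparent why $M_\lambda=2$ (the determinant) together with the anti-Pisot spectrum are precisely the hypotheses under which the argument runs.
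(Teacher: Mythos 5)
The paper does not prove this statement at all: it is imported verbatim as \cite[Theorem 1.6]{DFW} and used as a black box, so your proposal has to stand on its own merits. It does not, because the lemma on which your whole argument rests is false.

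Your reduction is fine: the Riesz product, discarding factors, and the observation that $\rho N$ good scales among $n\in\{0,\dots,N\}$, $N=\lfloor\log_\lambda|\xi|\rfloor$, give $|\hat{\mu}_\lambda(\xi)|\le(1-c)^{\rho N}\ll|\xi|^{-\sigma}$. The fatal step is the claim that for Garsia $\lambda$ every integer chain $a_0,\dots,a_m$ with $|a_j-\lambda a_{j+1}|\le C\epsilon$ has length bounded by $L(\lambda,\epsilon)$ \emph{uniformly in the starting integer}. This is false for every Garsia number, already for $\lambda=\sqrt2$ (minimal polynomial $x^2-2$, a Garsia number inside the range $(1,\sqrt{2}]$ of this paper). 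Take a continued-fraction convergent $a/b$ of $\sqrt2$, set $t=a+b\sqrt2$ and $a_j=\mathrm{Tr}(\lambda^j t)\in\ZZ$; then $|\lambda^jt-a_j|=2^{j/2}|a-b\sqrt2|\le 2^{j/2}/b$, so whenever $b\ge 2^{L/2}/\epsilon$ the integers $a_L,\dots,a_0$ form a chain of length $L$ with errors $O(\epsilon)$. The same happens for any Garsia number of degree $d$: Minkowski's theorem applied to the lattice $\ZZ[\lambda]$ produces $t$ large at the real place and of size $O(\epsilon\,|\lambda_i|^{-L})$ at every conjugate place, and $a_j=\mathrm{Tr}(\lambda^jt)$ is then a chain of length $L$ once the top integer is allowed to be of size roughly $2^L\epsilon^{-(d-1)}$ (here $\prod_{i\geq 2}|\lambda_i|=2/\lambda$ enters). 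These chains are realized by genuine frequencies, e.g. $2\xi=\lambda^Lt$, so they cannot be legislated away: any true version of your lemma must let the chain length grow like the logarithm of the starting integer, equivalently like the number of scales sitting below the run.

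With that weaker (correct) input, your counting collapses. A run of length $L$ starting at scale $n_0$ is then only constrained by $L\lesssim (N-n_0)\log_2\lambda+O_\epsilon(1)$, and the worst-case packing of such runs, separated by single good scales, leaves only $O(\log N)$ good scales in $\{0,\dots,N\}$; that yields $|\hat{\mu}_\lambda(\xi)|\lesssim(\log|\xi|)^{-c}$, far short of power decay. Ruling out this scenario requires exploiting correlations between the different bad runs of one fixed $\xi$, and neither tool you invoke can do this. The Erd\H{o}s--Kahane scheme is a counting argument over the \emph{parameter} $\lambda$ — it bounds the dimension of the exceptional set of $\lambda$, exactly as this paper quotes it — and gives nothing for one fixed Garsia $\lambda$, which may a priori lie in the exceptional set. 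Hochman-type exponential separation is a separation property of the IFS atoms and implies no Fourier decay whatsoever; indeed the paper's Theorem \ref{thm: main} must assume power Fourier decay as a hypothesis precisely because exponential separation does not supply it. The arithmetic your sketch is missing is the divisibility structure forced by $M_\lambda=2$: inside a bad run the integers satisfy the linear recurrence attached to $P_\lambda$, and since the constant term of $P_\lambda$ is $\pm2$, extending a run \emph{downward} by one scale requires solving $a_{n+d}=\pm\tfrac12\bigl(a_n-\sum_{i\geq1}c_ia_{n+d-i}\bigr)$ in integers, i.e. a parity condition at every step. This $2$-adic obstruction (the same mechanism behind Garsia's separation lemma), not the archimedean ``anti-Pisot expansion'' you appeal to, is what actually limits the bad runs, and making it quantitative is the real content of Dai--Feng--Wang. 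Your closing suggestion via mixing of the expanding toral endomorphism has the same defect: mixing statements concern Haar measure and say nothing pointwise about the specific measure $\mu_\lambda$.
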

\subsection{Shmerkin's regularity theorem}
For each $\alpha<1$, we say that $\mu_\lambda$ is $\alpha$-regular if there are numbers $C, r_0>0$ such that for all $r<r_0$ and $x\in\mathbb{R},$
\[
\mu_\lambda(B_r(x))\leq C r^{\alpha}.
\]
If $\mu_\lambda$ is $\alpha$-regular for each $\alpha<1,$ we say that $\mu_\lambda$ is $1_-$-regular. According to \cite[Theorem 6.2]{Sh}, $\mu_\lambda$ is $1_-$-regular if $\mu_\lambda$ has the exponential separation condition. This is the case when $\lambda$ is algebraic and it is not a root of polynomials with coefficients $\pm 1,0.$ The $\alpha$-regularity of measures is closely related to the average $L^2$ Fourier decay. Let $\alpha\in (0,1).$ Let $\mu$ be an $\alpha$-regular probability measure on $\mathbb{R}.$ It is known that (\cite[Theorem 3.10]{Ma2}, \cite[Page 19, Discussion after the proof of Theorem 2.9]{Ma2})
\[
\int |\xi|^{-(1-s)} |\hat{\mu}(\xi)|^2d\xi <\infty
\]
for all $s<\alpha.$ In particular, if $\mu$ is $1_-$-regular, then the above holds with $1-s$ being replaced by any $\epsilon>0.$

The above regularity result holds more generally for all homogeneous self-similar measures\footnote{An earlier result of Hochman \cite{H14} provides a weaker regularity result which holds for all self-similar measures, not only the homogeneous ones.}. In particular, let $r^{-1}=\lambda\in (1,\infty).$ Let $K\geq 2$ be an integer. Let $a_1,\dots,a_K\in\mathbb{R}.$ Let $p_1,\dots,p_K>0$ with $p_1=\dots=p_k=1/K.$ For the associated self-similar measure $\mu$, we define its self-similarity dimension to be
\[
\dim_s \mu=\frac{\log K}{\log \lambda}.
\]
Now assume that $\lambda, a_1,\dots,a_K$ are algebraic numbers. As long as there are no exact overlaps, \cite[Theorem 6.2]{Sh} says $\mu$ is $\alpha$-regular for each
\[
\alpha<\min\{1,\dim_s\mu\}.
\]
\subsection{Continuous Bernoulli convolutions: some known examples}\label{sec: pre example}
Here we record some specific examples for continuous Bernoulli convolutions. Let $\lambda$ be such that $\lambda^k$ for some $k\geq 2$ is a Garsia number. Then $\mu_{\lambda}$ is continuous because it is a convolution of measures that have $L^\infty$ density functions, i.e. $\mu_\lambda=*_{j=0}^{k-1}T_{\lambda^j}\mu_{\lambda^k},$ where $T_{\lambda^j}$ is the linear map $x\to \lambda^j x.$ Examples of this kind include $2^{1/n},n\geq 2.$ A slightly less trivial example is the root of polynomial $x^{10}-2x^8+2x^6-2x^4+2x^2-2$ in $(1,2]$. In fact, let $\lambda$ be this root. Then we see that $\lambda^2$ is the root of $x^5-2x^4+2x^3-2x^2+2x-2$ in $(1,\sqrt{2}]$. Thus $\lambda^2$ is a Garsia number. 

Not all Garsia numbers are radical roots of other Garsia numbers. For example, consider the number $s=\lambda^2$ as above. Let $n\geq 2$ be an integer. Suppose that $s^n$ is Garsia. Let $P(x)$ be the minimal polynomial of $s^n.$ Then $P$ is monic (i.e. the leading term has coefficient $1$) and $P(s^n)=0.$ Let us look at the polynomial $Q(x)=P(x^n).$ We see that $Q(s)=0.$ Thus $Q(x)$ is a multiple of the polynomial $T(x)=x^5-2x^4+2x^3-2x^2+2x-2.$ Notice that $Q(0)=P(0)$ is in $\pm 2.$ This is because $s^n$ is Garsia. Thus we see that $Q(x)/T(x)$ must have constant term $\pm 1$. Suppose that $h(x)=Q(x)/T(x)$ is not a constant polynomial. Then it must be monic. Let $x_0$ be a root of $h(x).$ Then $P(x^n_0)=0.$ Therefore $|x_0|>1$. This is because $s^n$ is Garsia and therefore all its conjugates are outside of the unit circle. See Section \ref{Garsia Pre}. Thus all roots of $h$ are outside of the unit circle. Observe that $|h(0)|$ is the product of the absolute values of all the roots of $h$. This implies that $|h(0)|>1.$ This contradiction shows that $h(x)$ must be a constant polynomial, i.e. $h(x)$ is $\pm 1.$ Therefore we have
\[
P(x^n)=T(x) \text{ or } P(x^n)=-T(x).
\]
This is impossible unless $n=1$.

\section{proofs of the results}

In Section \ref{Garsia Pre}, we see that if $\lambda$ is a Garsia number, then $\lambda$ cannot be a root of polynomials with coefficients $\pm 1,0.$ This is because the minimal polynomial $P_\lambda$ has a constant term $\pm 2.$ This implies that the self-similar measure $\mu_\lambda$ has no exact overlaps. Since $\lambda$ is algebraic, $\mu_\lambda$, this implies that $\mu_\lambda$ satisfies the exponential separation condition.

The following result is an $L^1$-analogy of \cite[Lemma 2.1]{ShD}.  Technically we do not use this lemma. It follows directly from  \cite[Lemma 2.1]{ShD} and the fact that convolutions of $L^2$ functions are continuous. However, we do need this lemma to illustrate a simple idea involving Littlewood-Paley decomposition that will be applied later.

\begin{lma}\label{lemma: L2L2L2}
Let $\nu$ be a compactly supported Borel probability measure with power Fourier decay, i.e. for some $\sigma>0$, $|\hat{\nu}(\xi)|\ll |\xi|^{-\sigma}$. Let $\mu_1,\mu_2$ be compactly supported Borel probability measures which are $1_-$-regular. Then $\nu*\mu_1*\mu_2$ is continuous, i.e. it is absolutely continuous with respect to the Lebesgue measure, and the density function is continuous. In fact, the density function is $\beta$-H\"{o}lder continuous for $0<\beta<\sigma/(2+\sigma).$
\end{lma}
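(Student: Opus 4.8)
The plan is to show that the Fourier transform of $\theta:=\nu*\mu_1*\mu_2$ decays fast enough that $\int_{\mathbb{R}}(1+|\xi|)^\beta|\hat\theta(\xi)|\,d\xi<\infty$ for every $\beta$ in the stated range. Fourier inversion then produces a continuous bounded density, and the weighted integrability upgrades mere continuity to $\beta$-H\"older continuity through a standard modulus-of-continuity estimate.

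First I would record the three inputs. Since $\theta$ is a convolution, $\hat\theta=\hat\nu\,\hat{\mu_1}\,\hat{\mu_2}$. The hypothesis on $\nu$ gives $|\hat\nu(\xi)|\ll|\xi|^{-\sigma}$, while $|\hat{\mu_i}|\le 1$ everywhere. The $1_-$-regularity of $\mu_1,\mu_2$, via the Fourier characterisation recalled before the statement, gives $\int|\xi|^{-\epsilon}|\hat{\mu_i}(\xi)|^2\,d\xi<\infty$ for every $\epsilon>0$; denote these finite quantities by $M_i(\epsilon)$.

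The core estimate is a Littlewood--Paley/dyadic decomposition. Write $\mathbb{R}=\{|\xi|\le 1\}\cup\bigcup_{j\ge 0}A_j$ with $A_j=\{2^j<|\xi|\le 2^{j+1}\}$. On $A_j$ I would use $|\hat\nu|\ll 2^{-j\sigma}$ and $(1+|\xi|)^\beta\ll 2^{j\beta}$, and then pair the two regular factors by Cauchy--Schwarz,
\[
\int_{A_j}|\hat{\mu_1}\hat{\mu_2}|\,d\xi\le\Big(\int_{A_j}|\hat{\mu_1}|^2\,d\xi\Big)^{1/2}\Big(\int_{A_j}|\hat{\mu_2}|^2\,d\xi\Big)^{1/2}.
\]
The key point is that the weighted $L^2$ bound controls each block: since $|\xi|^{-\epsilon}\ge 2^{-(j+1)\epsilon}$ on $A_j$,
\[
\int_{A_j}|\hat{\mu_i}|^2\,d\xi\le 2^{(j+1)\epsilon}\int_{A_j}|\xi|^{-\epsilon}|\hat{\mu_i}|^2\,d\xi\le 2^{\epsilon}M_i(\epsilon)\,2^{j\epsilon},
\]
so $\int_{A_j}|\hat{\mu_1}\hat{\mu_2}|\ll_\epsilon 2^{j\epsilon}$. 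Combining the three inputs, $\int_{A_j}(1+|\xi|)^\beta|\hat\theta|\ll_\epsilon 2^{j(\beta-\sigma+\epsilon)}$, and summing over $j$ (the region $|\xi|\le 1$ contributing only a finite constant, as $|\hat\theta|\le 1$ there) yields $\int_{\mathbb{R}}(1+|\xi|)^\beta|\hat\theta|<\infty$ whenever $\beta+\epsilon<\sigma$. As $\epsilon>0$ is arbitrary, this covers the full range $0<\beta<\min\{\sigma,1\}$, and in particular the asserted $0<\beta<\sigma/(2+\sigma)$.

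Finally I would convert this to the conclusion. With $\int|\hat\theta|<\infty$, $\theta$ is absolutely continuous with continuous bounded density $f(x)=\int\hat\theta(\xi)e^{2\pi i x\xi}\,d\xi$. For H\"older regularity I would estimate $|f(x+h)-f(x)|\le\int|\hat\theta(\xi)|\,|e^{2\pi i h\xi}-1|\,d\xi$, split the integral at $|\xi|\sim|h|^{-1}$, and bound $|e^{2\pi i h\xi}-1|\le\min\{2,2\pi|h||\xi|\}$; the weighted integrability of $\hat\theta$ then gives $|f(x+h)-f(x)|\ll|h|^\beta$. The main obstacle, and the only genuinely non-trivial step, is the block estimate: one must exploit the power decay of $\nu$ together with \emph{both} regular factors at once, pairing $\mu_1,\mu_2$ through Cauchy--Schwarz so that the cheap per-block $L^2$ masses $2^{j\epsilon}$ multiply, rather than incurring the wasteful loss $\int_{A_j}1=2^j$ that one would suffer by treating a single regular measure against the trivial $L^2$ norm of $\hat\nu$ on the block; the latter route succeeds only under the stronger assumption $\sigma>1/2$.
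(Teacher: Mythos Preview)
Your argument is correct and follows essentially the same route as the paper: the dyadic decomposition, the per-block Cauchy--Schwarz pairing of $\hat{\mu}_1$ and $\hat{\mu}_2$, and the use of the $1_-$-regularity via $\int|\xi|^{-\epsilon}|\hat{\mu}_i|^2<\infty$ to bound $\int_{A_j}|\hat{\mu}_i|^2\ll 2^{j\epsilon}$ are exactly what the paper does.

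The only difference is in the H\"older step. The paper, after establishing $\int_{|\xi|>2^K}|\hat\theta|\ll 2^{-(\sigma-\epsilon)K}$, splits the inversion integral at $R$ and bounds the low-frequency part crudely by $|h|\int_{|\xi|\le R}|\xi|\,d\xi\ll |h|R^2$, using only $|\hat\theta|\le 1$; optimising $R$ then gives the exponent $(\sigma-\epsilon)/(2+\sigma-\epsilon)$, hence the stated range $\beta<\sigma/(2+\sigma)$. You instead recycle the dyadic estimate to prove the stronger weighted integrability $\int(1+|\xi|)^\beta|\hat\theta|<\infty$ for every $\beta<\sigma$, and then the standard $\min\{2,2\pi|h\xi|\}\ll(|h||\xi|)^\beta$ bound (or the split at $R=|h|^{-1}$) yields $\beta$-H\"older for all $\beta<\min\{\sigma,1\}$. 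So your version actually delivers a sharper H\"older exponent than the lemma asserts, at no extra cost; the paper's cruder low-frequency bound is what produces the $2$ in the denominator $2+\sigma$.
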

\begin{proof}
From the regularity of $\mu_1$ we see that for each $\epsilon>0,$
\[
\int |\xi|^{-\epsilon} |\hat{\mu}_1(\xi)|^2d\xi<\infty.
\]
Let $S_k=\{\xi: |\xi|\in [2^k,2^{k+1}]\}.$ We see that
\[
\int_{S_k}  |\hat{\mu}_1(\xi)|^2d\xi\ll 2^{\epsilon k}.
\]
A similar bound holds for $\mu_2.$ We choose $\epsilon<\sigma.$ This shows that for each integer $K\geq 1,$ we have

\begin{align*}
& \int_{|\xi|>2^{K}} |\hat{\nu}(\xi)\hat{\mu}_1(\xi)\hat{\mu}_2(\xi)|d\xi\\
&\ll \sum_{k\geq K} 2^{-\sigma k}\left(\int_{S_k} |\hat{\mu}_1(\xi)|^2d\xi \int_{S_k} |\hat{\mu}_2(\xi)|^2d\xi\right)^{1/2}\ll \sum_{k\geq K}2^{-\sigma k}2^{\epsilon k}\ll 2^{-(\sigma-\epsilon)K}.
\end{align*}
Thus the Fourier transform of $\mu'=\nu*\mu_1*\mu_2$ is in $L^1.$ From here, we see that $\mu'$ is absolutely continuous with a continuous density function. 

From now on, we identify $\mu'$ with its continuous density function. We now show that $\mu'$ is H\"{o}lder continuous via Littlewood-Paley decomposition. This argument is standard. See \cite[Theorem 6.3.7]{Gra}. We provide all the details for the sake of self-containedess. Since $\mu'$ has absolutely integrable Fourier transform, we see that for each $x\in\mathbb{R},$
\[
\mu'(x)=\int \hat{\mu'}(\xi)e^{2\pi i \xi x}dx.
\]
Let $h>0.$ We see that
\[
\mu'(x+h)=\int \hat{\mu'}(\xi)e^{2\pi i \xi x}e^{2\pi i \xi h}dx.
\]
Thus we have for each $R>0,$ $\epsilon\in (0,\sigma),$
\begin{align*}
|\mu'(x+h)-\mu'(x)|&=\left|\int \hat{\mu}'(\xi)e^{2\pi i \xi x}(1-e^{2\pi i \xi h})d\xi\right|\\
& \leq \int_{|\xi|\leq R}|\hat{\mu'}(\xi)||e^{2\pi i \xi h}-1|d\xi+2\int_{|\xi|>R}|\hat{\mu'}(\xi)|d\xi \\
&\ll \int_{|\xi|\leq R}|\hat{\mu'}(\xi)|\min\{2\pi |\xi h|,2\}d\xi+R^{-(\sigma-\epsilon)}\\
&\ll |h|R^{2}+R^{-(\sigma-\epsilon)}.
\end{align*}
Choosing $R=|h|^{-1/(2+\sigma-\epsilon)}$, we see that
\[
|\mu'(x+h)-\mu'(x)|\ll |h|^{(\sigma-\epsilon)/(2+\sigma-\epsilon)}.
\]
This concludes the proof.
\end{proof}

Now we can extract information from Bernoulli convolutions. 
\begin{proof}[Proof of Theorem \ref{thm: main}]
Let $\lambda\in (1,\sqrt{2}]$ be an algebraic number which is not a root of polynomials with coefficients $\pm 1,0.$ 

Let $k\geq 2$ be an integer. Let $J\subset \{0,\dots, k-1\}$ be a nonempty subset. Consider the measure
\[
\nu_J=*_{j\in J} T_{\lambda^{j}}\mu_{\lambda^k},
\]
where $T_{\lambda^{j}}$ is the linear map $x\to \lambda^{j}x.$
Notice that $J=\{0,\dots,k-1\}$ implies that $\nu_J=\mu_\lambda.$ 

The measure $\nu_J$ is a self-similar measure with contraction ratio $\lambda^k$ and $2^{\# J}$ many translations. It is a subsystem of $\mu_\lambda$, and thus there are no exact overlaps in $\nu_J.$ Since $\lambda$ is algebraic, by \cite[Theorem 6.2]{Sh}, we see that $\nu_J$ is $\alpha$-regular for each
\[
\alpha<\min\left\{\frac{\log 2^{\# J}}{\log \lambda^{k}},1\right\}.
\]

Suppose that 
\begin{align*}\label{power decay}
\hat{\mu}_{\lambda}(\xi)\ll|\xi|^{-\sigma}\tag{power decay}
\end{align*}
for some $\sigma>0.$ It would be nice if for each $k\geq 1$ there is a $\sigma_k>0$ such that
\[
\hat{\mu}_{\lambda^k}(\xi)\ll|\xi|^{-\sigma_k}.
\]
This seems to be too much to hope. To get around this issue, we use a weaker alternative:
\begin{itemize}
    \item Let $k\geq 2$ be an integer. Then for $\xi\to\infty,$ we have
\begin{align*}\label{virtual decay}
\min_{j\in\{0,\dots,k-1\}}\left\{|\hat{\mu}_{\lambda^k}(\lambda^j \xi)|\right\}\ll |\xi|^{-\sigma/k}.\tag{virtual decay}
\end{align*}
\end{itemize}
To see why (\ref{virtual decay}) holds for $\mu_\lambda$, observe that
\[
\mu_\lambda=*_{j\in\{0,\dots,k-1\}}T_{\lambda^j}\mu_{\lambda^k}.
\]
This implies that for each $\xi\in\mathbb{R},$
\[
|\hat{\mu_\lambda}(\xi)|=\prod_{j=0}^{k-1}|\hat{\mu}_{\lambda^k}(\lambda^j\xi)|.
\]
Because of (\ref{power decay}), we see that
\[
\prod_{j=0}^{k-1}|\hat{\mu}_{\lambda^k}(\lambda^j\xi)|\ll |\xi|^{-\sigma}.
\]
Now (\ref{virtual decay}) follows because
\[
\left(\min_{j\in\{0,\dots,k-1\}}\left\{|\hat{\mu}_{\lambda^k}(\lambda^j \xi)|\right\}\right)^{k}\leq\prod_{j=0}^{k-1}|\hat{\mu}_{\lambda^k}(\lambda^j\xi)|.
\]
Let $k$ be a large integer which will be chosen later. Observe that
\[
\int |\hat{\mu}_\lambda(\xi)| d\xi=\int \prod_{j=0}^{k-1}|\hat{\mu}_{\lambda^k}(\lambda^j \xi)|d\xi.
\]
Because of the property (\ref{virtual decay}), we see that
\[
\prod_{j=0}^{k-1}|\hat{\mu}_{\lambda^k}(\lambda^j \xi)|\ll |\xi|^{-\sigma/k} \sum_{l=0}^{k-1} \prod_{j=0,j\neq l}^{k-1}|\hat{\mu}_{\lambda^k}(\lambda^j \xi)|.
\]

For each $l\in\{0,\dots,k-1\},$ let
\[
\nu_l=*^{j=k-1}_{j=0,j\neq l}T_{\lambda^j}\mu_{\lambda^k}.
\]
From now on, we fix $l=0$, but the following arguments work for all other $l$ as well. Let $J\subset\{1,\dots,k-1\}$ be a subset. Let $J'$ be the complement. We assume that $k-1$ is an even number and $\# J=\# J'=(k-1)/2.$ We split $\nu_0$ into two parts
\[
\nu_0=\nu_J*\nu_{J'}.
\]
Notice that $\nu_J, \nu_{J'},$ are all $\alpha$-regular for 
\[
\alpha<\min\left\{\frac{\log 2^{(k-1)/2}}{\log \lambda^k},1\right\}.
\]
Observe that
\[
\frac{\log 2^{(k-1)/2}}{\log \lambda^k}=\frac{k-1}{2k}\frac{\log 2}{\log \lambda}.
\]
Thus as long as $\log 2/\log \lambda>2$, i.e. $\lambda<\sqrt{2},$ it is possible to choose $k$ such that
\[
\frac{k-1}{2k}\frac{\log 2}{\log \lambda}>1.
\]
We fix such a number $k.$ For this number $k,$ we see that $\nu_{J},\nu_{J'}$ are $1_-$-regular. Observe that
\begin{align*}
&\int |\xi|^{-\sigma/k} |\hat{\nu}_0(\xi)|d\xi=\int |\xi|^{-\sigma/k} |\hat{\nu}_J(\xi)||\hat{\nu}_{J'}(\xi)|d\xi.
\end{align*}
Since $\nu_J, \nu_{J'}$ are $1_{-}$-regular, this is the same situation we had in Lemma \ref{lemma: L2L2L2}. With the same method, we see that
\[
\int |\xi|^{-\sigma/k} |\hat{\nu}_0(\xi)|d\xi<\infty.
\]
Arguing with other values for $l$ (there are finitely many of them) we see that
\[
\int \prod_{j=0}^{k-1}|\hat{\mu}_{\lambda^k}(\lambda^j \xi)|d\xi\ll \int |\xi|^{-\sigma/k} \sum_{l=0}^{k-1} \prod_{j=0,j\neq l}^{k-1}|\hat{\mu}_{\lambda^k}(\lambda^j \xi)|d\xi<\infty.
\]
From here we conclude that $\mu_\lambda$ has $L^1$ Fourier transform and thus $\mu_\lambda$ is absolutely continuous with a continuous density function. Moreover, we can also follow the last step of the proof of Lemma \ref{lemma: L2L2L2} we see that $\mu_\lambda$ is in fact $\beta$-H\"{o}lder for each positive $\beta$ with
\[
\beta<\frac{\sigma}{2k+\sigma}.
\]
\end{proof}
\section{Questions}
In this section, we pose two questions. Notice that if $\mu_\lambda$ is continuous, then it makes sense to take the value $\mu(0).$ 
\begin{ques}
Let $\lambda\in (1,\sqrt{2}]$ be a Garsia number. What is the value of $\mu_\lambda(0)$? Is it rational?
\end{ques}
Let $n\geq 1$ be an integer. If $\lambda=2^{1/n}$ we see that $\mu_\lambda$ is a convolution of Lebesgue measures on $n$ different intervals. In this case, the function $\mu_\lambda$ can be explicitly determined. Other than those examples, we are not aware of any other values $\lambda$ for which the above question is answered. 

Next, as we showed that $\mu_\lambda$ is $\beta$-H\"{o}lder continuous for some positive parameter $\beta$. It is natural to consider how this result can be improved.
\begin{ques}
Let $\lambda\in (1,\sqrt{2}]$ be a Garsia number. Is $\mu_\lambda$ a Lipschitz function? Is $\mu_\lambda$ almost everywhere differentiable? 
\end{ques}
Again, roots of $2$ provide us with examples for which this question can be answered.

\section{Acknowledgement}
HY was financially supported by the University of Cambridge and the Corpus Christi College, Cambridge. HY has received funding from the European Research Council (ERC) under the European Union's Horizon 2020 research and innovation programme (grant agreement No. 803711). HY thanks P. Shmerkin and P. Varj\'{u} for many very helpful comments.

\bibliographystyle{amsplain}

\end{document}